\documentclass{amsart}
\usepackage{amsmath,amssymb}
\usepackage[dvips]{graphics}
\usepackage[dvipdfmx]{graphicx}
\usepackage[all]{xy}


\newtheorem{Theorem}{Theorem}[section]
\newtheorem{Lemma}[Theorem]{Lemma}

\newtheorem{Corollary}[Theorem]{Corollary}

\theoremstyle{definition}

\theoremstyle{remark}
\newtheorem{Remark}[Theorem]{Remark}



\makeatletter
\@addtoreset{figure}{section}
\def\@thmcountersep{-}
\makeatother


\numberwithin{equation}{section}



\begin{document} 

\title[A short proof of the generalized Conway--Gordon--Sachs theorem]{A short proof of the generalized Conway--Gordon--Sachs theorem}

\author{Ryo Nikkuni}
\address{Department of Information and Mathematical Sciences, School of Arts and Sciences, Tokyo Woman's Christian University, 2-6-1 Zempukuji, Suginami-ku, Tokyo 167-8585, Japan}
\email{nick@lab.twcu.ac.jp}
\thanks{The author was supported by JSPS KAKENHI Grant Number JP19K03500.}

\subjclass{Primary 57M15; Secondary 57K10}

\date{}


\keywords{Spatial graphs, Conway--Gordon--Sachs theorem}

\begin{abstract}
The famous Conway--Gordon--Sachs theorem for the complete graph on six vertices was extended to the general complete graph on $n$ vertices by Kazakov--Korablev as a congruence modulo $2$, and its integral lift was given by Morishita--Nikkuni. However, the proof is complicated and long. In this paper, we provide a shorter proof of the generalized Conway--Gordon--Sachs theorem over integers.
\end{abstract}

\maketitle

\section{Introduction} 

An embedding $f$ of a finite graph $G$ into the $3$-sphere is called a {\it spatial embedding} of $G$, and $f(G)$ is called a {\it spatial graph} of $G$. We call a subgraph of $G$ homeomorphic to the circle a {\it cycle} of $G$, and a cycle containing exactly $p$ vertices a {\it $p$-cycle}. We denote the set of all $p$-cycles of $G$ by $\Gamma_{p}(G)$. Additionally, we also denote the set of all disjoint pairs of cycles of $G$ consisting of a $p$-cycle and a $q$-cycle by $\Gamma_{p,q}(G)$. In general, for a closed $1$-manifold $\lambda$ in $G$, the image $f(\lambda)$ is a knot or a link contained in $f(G)$. In particular, if $\lambda$ belongs to $\Gamma_{p,q}(G)$, then we call $f(\lambda)$ a {\it $(p,q)$-link} of $f(G)$. Moreover, if $\lambda$ contains all vertices of $G$, then we also call $f(\lambda)$ a {\it Hamiltonian link} of $f(G)$.

Let $K_{n}$ be the {\it complete graph} on $n$ vertices, that is the graph consisting of $n$ vertices such that each pair of two distinct vertices is connected by exactly one edge. The fact that for every spatial graph of $K_6$, the sum of the linking numbers over all of the $(3,3)$-links is odd is well known as the {\it Conway-Gordon--Sachs Theorem} \cite{CG83}, \cite{S84}. If $n$ is 7 or more, Kazakov--Korablev showed that for every spatial graph of $K_{n}$, the sum of the linking numbers over all of the Hamiltonian links is even \cite{KK14}. On the other hand, Morishita and the author significantly generalized this modulo $2$ congruence formula over integers as follows.

\begin{Theorem}{\rm (Morishita--Nikkuni \cite{MN21})}\label{lkrefine} 
Let $n\ge 6$ be an integer and $p,q\ge 3$ two integers satisfying $n=p+q$. For every spatial graph $f(K_n)$ of $K_{n}$, the following holds: 
\begin{eqnarray}\label{ilKK0}
\sum_{\lambda\in\Gamma_{p,q}(K_n)}{\rm lk}(f(\lambda))^2
=(2-\delta_{pq})\cdot(n-6)!\sum_{\lambda\in\Gamma_{3,3}(K_n)}{\rm lk}(f(\lambda))^2. 
\end{eqnarray}
Here, {\rm lk} denotes the linking number and $\delta_{pq}$ denotes the Kronecker's delta. 
\end{Theorem}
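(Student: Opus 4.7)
My plan is to reformulate the theorem in terms of ordered cycle pairs, then expand the squared linking numbers via an edge-pair decomposition and verify the resulting combinatorial identity case by case.

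First, I would pass to ordered pairs. Let $\tilde\Gamma_{p,q}(K_n)$ denote the set of ordered disjoint pairs $(\alpha,\beta)$ with $\alpha$ a $p$-cycle and $\beta$ a $q$-cycle. Since $\sum_{\lambda\in\Gamma_{p,q}(K_n)}{\rm lk}(f(\lambda))^2 = (1+\delta_{pq})^{-1}\sum_{(\alpha,\beta)\in\tilde\Gamma_{p,q}(K_n)}{\rm lk}(f(\alpha),f(\beta))^2$ and analogously for $(3,3)$, the factor $(2-\delta_{pq})$ is absorbed and the theorem reduces to the cleaner identity
\[
\sum_{(\alpha,\beta)\in\tilde\Gamma_{p,q}(K_n)} {\rm lk}(f(\alpha),f(\beta))^2 \;=\; (n-6)!\sum_{(\alpha,\beta)\in\tilde\Gamma_{3,3}(K_n)} {\rm lk}(f(\alpha),f(\beta))^2.
\]

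Next, I would fix a regular projection of $f(K_n)$ and, for each ordered pair $(e,e')$ of vertex-disjoint edges of $K_n$, let $\ell(e,e')\in\mathbb{Z}$ denote the signed count of over-crossings of $f(e)$ above $f(e')$, so that ${\rm lk}(f(\alpha),f(\beta))=\sum_{e\in E(\alpha),\,e'\in E(\beta)}\ell(e,e')$ for every vertex-disjoint $\alpha,\beta$. Expanding both sides of the target identity as sums over ordered quadruples $(e_1,e_2,e_1',e_2')$ weighted by the combinatorial count
\[
N_{p,q}(e_1,e_2;e_1',e_2'):=\#\{(\alpha,\beta)\in\tilde\Gamma_{p,q}(K_n):e_1,e_2\in E(\alpha),\;e_1',e_2'\in E(\beta)\}
\]
(and the analogue $N_{3,3}$), the identity becomes the polynomial statement
\[
\sum_{(e_1,e_2,e_1',e_2')}\bigl(N_{p,q}-(n-6)!\,N_{3,3}\bigr)\,\ell(e_1,e_1')\,\ell(e_2,e_2')=0.
\]
A case analysis on the incidence structure of $\{e_1,e_2\}$ and $\{e_1',e_2'\}$ verifies $N_{p,q}=(n-6)!\,N_{3,3}$ whenever both pairs fit inside a single triangle (i.e.\ each pair shares a vertex or coincides): the $(n-6)!$ factor then emerges as the number of orderings of the $n-6$ free vertices inserted into a $(3,3)$-sublink to produce the $p$- and $q$-cycles.

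The main obstacle is handling the "bad" quadruples in which two vertex-disjoint edges lie within a single cycle of length $\ge 4$. For these, $N_{3,3}=0$ while $N_{p,q}\ne 0$, so the associated products $\ell(e_1,e_1')\,\ell(e_2,e_2')$ must cancel via additional relations. The cleanest way to extract such relations is via the fan-triangulation identity $[\alpha]=\sum_i[T_i]$ in the $1$-cycle group of $K_n$ (expressing any $p$-cycle as a signed sum of triangles on its vertex set), which lets one rewrite the offending quadratic terms as combinations of triangle-triangle linking numbers and then match them off when summed over all $(\alpha,\beta)\in\tilde\Gamma_{p,q}(K_n)$. Carrying out this cancellation systematically is where the bulk of the combinatorial work lies.
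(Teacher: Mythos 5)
Your reduction to ordered pairs is exactly how the paper absorbs the factor $(2-\delta_{pq})$, and your coefficient check $N_{p,q}=(n-6)!\,N_{3,3}$ for the ``good'' quadruples (both edge pairs fitting in a triangle) is correct as far as it goes. But the proof has a genuine gap precisely where you locate ``the bulk of the combinatorial work'': the quadruples in which two vertex-disjoint edges lie in the same long cycle. For these, $N_{3,3}=0$ while $N_{p,q}\neq 0$, so the identity cannot be a termwise statement about the coefficients $N_{p,q}-(n-6)!\,N_{3,3}$; it only holds because the crossing data $\ell(e,e')$ arising from an actual embedding satisfy homological relations among \emph{closed} cycles. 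Your proposed remedy --- triangulate each $p$-cycle as $[\alpha]=\sum_i[T_i]$ and cancel the offending terms --- is the right kind of idea, but after squaring you get cross terms ${\rm lk}(f(T_i),f(\beta))\,{\rm lk}(f(T_j),f(\beta))$ for triangles sharing vertices, and showing that these recombine correctly when summed over all $(\alpha,\beta)$ is exactly the content of the theorem; you have not carried it out, and there is no reason to expect it to be a routine verification.

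For comparison, the paper isolates this difficulty in a one-variable form: fixing one component as a loop $e$ disjoint from a complete graph, it proves $\sum_{\gamma\in\Gamma_{n}(K_n)}{\rm lk}(f(e),f(\gamma))^2=\sum_{\gamma\in\Gamma_{n-1}(K_n)}{\rm lk}(f(e),f(\gamma))^2$ (Lemma \ref{circleKnlemma}), starting from the $K_4$ case where a $4$-cycle is the sum of two triangles in $H_1(K_4;\mathbb{Z})$ (Lemma \ref{circleK4lemma} --- the homological identity your fan triangulation would need), and then handling the cross terms for general $n$ by an induction over the subdivided subgraphs $F_{ij}^{(m)}$ together with a counting argument. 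Iterating gives $\sum_{\gamma\in\Gamma_{n}(K_n)}{\rm lk}(f(e),f(\gamma))^2=(n-3)!\sum_{\gamma\in\Gamma_{3}(K_n)}{\rm lk}(f(e),f(\gamma))^2$ (Corollary \ref{circleKncor}), which is then applied once in each component, with the factor $\binom{n-6}{p-3}$ accounting for overcounting of $3$-cycles among the $K_p$-subgraphs. Until you supply an argument of comparable substance for your cancellation step, your proposal is a plan rather than a proof.
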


It also holds from (\ref{ilKK0}) that 
\begin{eqnarray}\label{ilKK}
\sum_{p+q=n}
\sum_{\lambda\in\Gamma_{p,q}(K_n)}{\rm lk}(f(\lambda))^2
=
(n-5)!\sum_{\lambda\in\Gamma_{3,3}(K_n)}{\rm lk}(f(\lambda))^2.
\end{eqnarray}
Kazakov--Korablev's result above can be obtained immediately by taking the modulo $2$ reduction on both sides of (\ref{ilKK}). Since the right side of (\ref{ilKK}) is not zero, it is also clear that for any $p$ and $q$ satisfying $p+q=n$, there always exists a nonsplitable Hamiltonian $(p,q)$-link of $f(K_{n})$. This is also a generalization of Vesnin--Litvintseva's result \cite{YL10} that $f(K_{n})$ always contain a nonsplittable Hamiltonian link. Furthermore, it has also been shown in \cite{MN21} that for $n\ge 6$, the following congruence modulo $2\cdot (n-5)!$ can be obtained from (\ref{ilKK}): 
\begin{eqnarray*}\label{congru3}
\sum_{p+q=n}
\sum_{\lambda\in\Gamma_{p,q}(K_n)}\!\!\!\!{\rm lk}(f(\lambda))^2 
\equiv 
\left\{
   \begin{array}{@{\,}lll}
   (n-5)! & (n\equiv 6,7\pmod{8})\\
   0 & (n\not\equiv 6,7\pmod{8}). 
   \end{array}
\right.
\end{eqnarray*}
For additional formulas on evaluating the sum of linking numbers over Hamiltonian links derived from (\ref{ilKK0}), see \cite{MN21}. The proof of Theorem \ref{lkrefine} given in \cite{MN21} was done using tricky induction procedures and was lengthy and tedious. Our purpose in this paper is to simplify and shorten the proof of Theorem \ref{lkrefine}, thereby making it more accessible to a wider audience. The key to this simplification is Lemma \ref{circleKnlemma}, which reveals a particular homological property concerning the linking of a knot with a spatial complete graph. The proof of Theorem \ref{lkrefine} presented here is a reorganized version of the one originally introduced by the author in the Japanese textbook \cite{Nikkuni22} several years ago.

\begin{Remark}
Conway--Gordon also showed that for every spatial graph of $K_{7}$, the sum of the Arf invariants over all of the Hamiltonian knots is odd \cite{CG83}. For further details on refining this fact or extending it to general $K_{n}$, see \cite{Nikkuni09}, \cite{MN19}. 
\end{Remark}

\section{New proof of Theorem \ref{lkrefine}} 

In the following, let $K_{n}$ consist of $n$ vertices $1,2,\ldots,n$. We denote the edge of $K_n$ connecting two distinct vertices $i$ and $j$ by $\overline{ij}$, and denote a path of length 2 of $K_n$ consisting of two edges $\overline{ij}$ and $\overline{jk}$ by $\overline{ijk}$.

\begin{Lemma}\label{circleK4lemma} 
Let $G$ be the graph which is a disjoint union of a loop $e$ and $K_{4}$. For every spatial graph $f(G)$ of $G$, the following holds: 
\begin{eqnarray}\label{cK4lk} 
\sum_{\delta\in\Gamma_{4}(K_{4})}{\rm lk}(f(e),f(\delta))^2
=\sum_{\gamma\in\Gamma_{3}(K_{4})}{\rm lk}(f(e),f(\gamma))^2. 
\end{eqnarray}
\end{Lemma}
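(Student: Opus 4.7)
The plan is to view $\mathrm{lk}(f(e),\cdot)$ as a group homomorphism on the $1$-cycles of $K_4$ and then to reduce (\ref{cK4lk}) to a polynomial identity in three integer variables. I would take a Seifert surface $D$ for the knot $f(e)$, placed in general position with $f(K_4)$ so that $D$ meets the interior of each $f(\overline{ij})$ transversely. Orienting every edge $\overline{ij}$ from $i$ to $j$ whenever $i<j$ and setting $a_{ij}:=D\cdot f(\overline{ij})\in\mathbb{Z}$, for any oriented $1$-cycle $\sigma=\sum_{ij}\epsilon_{ij}\overline{ij}$ of $K_4$ one has $\mathrm{lk}(f(e),f(\sigma))=\sum_{ij}\epsilon_{ij}a_{ij}$. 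Hence $\ell(\sigma):=\mathrm{lk}(f(e),f(\sigma))$ extends to a homomorphism $\ell\colon Z_1(K_4;\mathbb{Z})\to\mathbb{Z}$.

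Next, $Z_1(K_4;\mathbb{Z})$ is free of rank $6-4+1=3$. Label the four triangles by $\gamma_1,\ldots,\gamma_4$ so that $\gamma_k$ is the triangle on $\{1,2,3,4\}\setminus\{k\}$, and orient each by listing its vertices in increasing order. An edge-by-edge check produces the single relation $\gamma_1-\gamma_2+\gamma_3-\gamma_4=0$, and the triangles $\gamma_1,\gamma_2,\gamma_3$ are linearly independent (each contains an edge not in either of the other two), so they form a basis. Each of the three Hamiltonian $4$-cycles $\delta_1,\delta_2,\delta_3$ decomposes as a $\mathbb{Z}$-combination of two triangles sharing a chord, yielding, after a choice of orientation,
\[
\delta_1=\gamma_2+\gamma_4,\qquad \delta_2=\gamma_3-\gamma_2,\qquad \delta_3=\gamma_3-\gamma_4.
\]

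Finally, set $x_i:=\ell(\gamma_i)$ for $i=1,2,3$. Applying $\ell$ yields $\ell(\gamma_4)=x_1-x_2+x_3$, $\ell(\delta_1)=x_1+x_3$, $\ell(\delta_2)=x_3-x_2$, and $\ell(\delta_3)=x_2-x_1$. A direct expansion then shows that both
\[
\sum_{\gamma\in\Gamma_3(K_4)}\ell(\gamma)^2 \;=\; x_1^2+x_2^2+x_3^2+(x_1-x_2+x_3)^2
\]
and
\[
\sum_{\delta\in\Gamma_4(K_4)}\ell(\delta)^2 \;=\; (x_1+x_3)^2+(x_3-x_2)^2+(x_2-x_1)^2
\]
equal $2(x_1^2+x_2^2+x_3^2-x_1x_2+x_1x_3-x_2x_3)$, establishing (\ref{cK4lk}). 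The only step demanding care is bookkeeping signs when expressing $\gamma_4$ and each $\delta_j$ in the chosen basis; once these linear relations are in place, the quadratic identity falls out by inspection.
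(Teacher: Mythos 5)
Your proof is correct, and it is essentially the paper's argument: both treat ${\rm lk}(f(e),\cdot)$ as a linear function on $H_1(K_4;\mathbb{Z})$ (your Seifert-surface intersection just makes that linearity explicit), express the $4$-cycles and the fourth triangle as integer combinations of three basis triangles, and finish with the same quadratic expansion. Your sign conventions and labeling differ from the paper's, but I checked the relations $\gamma_4=\gamma_1-\gamma_2+\gamma_3$, $\delta_1=\gamma_2+\gamma_4$, $\delta_2=\gamma_3-\gamma_2$, $\delta_3=\gamma_3-\gamma_4$ and the resulting identity, and they are all right.
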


\begin{proof}
We give the orientation to the loop $e$ and each of the edges of $K_{4}$ as shown in Fig. \ref{circleK4}, and identify each of the cycles of $K_{4}$ with an element of the first integral homology group $H_{1}(K_{4};{\mathbb Z})$ as follows: 
\begin{eqnarray*}
&&\gamma_{1} = \langle 12\rangle + \langle 24\rangle - \langle 14\rangle,\ 
\gamma_{2} = \langle 23\rangle + \langle 34\rangle - \langle 24\rangle,\
\gamma_{3} = \langle 31\rangle + \langle 14\rangle - \langle 34\rangle,\\
&&\gamma_{4} = \langle 12\rangle + \langle 23\rangle + \langle 31\rangle,\
\delta_{1} = \langle 23\rangle + \langle 31\rangle + \langle 14\rangle - \langle 24\rangle,\\
&&\delta_{2} = \langle 31\rangle + \langle 12\rangle + \langle 24\rangle - \langle 34\rangle,\ \delta_{3} = \langle 12\rangle + \langle 23\rangle + \langle 34\rangle - \langle 14\rangle. 
\end{eqnarray*}
Note that $\gamma_{i}\in \Gamma_{3}(K_{4})\ (i=1,2,3,4)$ and $\delta_{j}\in \Gamma_{4}(K_{4})\ (j=1,2,3)$. Since $\delta_{j}=\gamma_{j+1}+\gamma_{j+2}$ in $H_{1}(K_{4};{\mathbb Z})$, we have 
\begin{eqnarray*}
{\rm lk}(f(e),f(\delta_{j})) &=& {\rm lk}(f(e),f(\gamma_{j+1}))+{\rm lk}(f(e),f(\gamma_{j+2})), 
\end{eqnarray*}
where we regard $\gamma_{j+3}=\gamma_{j}$. Then we have 
\begin{eqnarray}\label{ck41}
\sum_{j=1}^{3}{\rm lk}(f(e),f(\delta_{j}))^{2}
&=& 2\sum_{i=1}^{3}{\rm lk}(f(e),f(\gamma_{i}))^{2}\\
&& + 2\sum_{1\le i<j\le 3}{\rm lk}(f(e),f(\gamma_{i})){\rm lk}(f(e),f(\gamma_{j})). \nonumber
\end{eqnarray}
On the other hand, since $\gamma_{4}=\gamma_{1}+\gamma_{2}+\gamma_{3}$ in $H_{1}(K_{4};{\mathbb Z})$, we have 
\begin{eqnarray}\label{ck42}
{\rm lk}(f(e),f(\gamma_{4})) = \sum_{i=1}^{3}{\rm lk}(f(e),f(\gamma_{i})).
\end{eqnarray}
By (\ref{ck41}) and (\ref{ck42}), we have 
\begin{eqnarray*}
\sum_{i=1}^{4}{\rm lk}(f(e),f(\gamma_{i}))^{2}
&=& \sum_{i=1}^{3}{\rm lk}(f(e),f(\gamma_{i}))^{2} + {\rm lk}(f(e),f(\gamma_{4}))^{2}\\
&=& 2\sum_{i=1}^{3}{\rm lk}(f(e),f(\gamma_{i}))^{2}\\
&& + 2\sum_{1\le i<j\le 3}{\rm lk}(f(e),f(\gamma_{i})){\rm lk}(f(e),f(\gamma_{j}))\\
&=& \sum_{j=1}^{3}{\rm lk}(f(e),f(\delta_{j}))^{2}. 
\end{eqnarray*}
This completes the proof. 
\end{proof}

\begin{figure}[htbp]
\begin{center}
\scalebox{0.6}{\includegraphics*{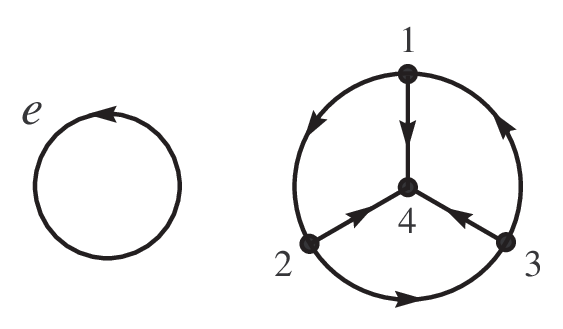}}
\caption{Oriented loop $e$ and oriented $K_{4}$}
\label{circleK4}
\end{center}
\end{figure}

\begin{Lemma}\label{circleKnlemma} 
Let $n\ge 4$ be an integer and $G$ the graph which is a disjoint union of a loop $e$ and $K_{n}$. For every spatial graph $f(G)$ of $G$, the following holds: 
\begin{eqnarray}\label{cKnlk} 
\sum_{\gamma\in\Gamma_{n}(K_{n})}{\rm lk}(f(e),f(\gamma))^2
=\sum_{\gamma\in\Gamma_{n-1}(K_{n})}{\rm lk}(f(e),f(\gamma))^2. 
\end{eqnarray}
\end{Lemma}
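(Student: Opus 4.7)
The plan is to prove Lemma \ref{circleKnlemma} by strong induction on $n\ge 4$, with Lemma \ref{circleK4lemma} serving as the base case $n=4$. For brevity set $A_m=\sum_{\gamma\in\Gamma_m(K_n)}{\rm lk}(f(e),f(\gamma))^2$ for $3\le m\le n$; the goal is $A_n=A_{n-1}$.

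\emph{Step 1 (subgraph averaging).} For each $m$ with $4\le m\le n-1$, the inductive hypothesis applies to every induced sub-$K_m$ of $K_n$. Summing the resulting equality over all $\binom{n}{m}$ such subgraphs, and using that each $m$-cycle of $K_n$ is contained in exactly one sub-$K_m$ (the one on its own vertex set) while each $(m-1)$-cycle is contained in exactly $n-m+1$ of them, yields $A_m=(n-m+1)\,A_{m-1}$. Telescoping from $m=n-1$ down to $m=4$ produces $A_{n-1}=2\cdot 3\cdots(n-3)\cdot A_3=(n-3)!\,A_3$.

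\emph{Step 2 (homological decomposition of Hamiltonian cycles).} Fix a vertex $v\in V(K_n)$. For each oriented $(n-1)$-cycle $\delta=a_1 a_2\cdots a_{n-1}a_1$ of $K_n$ missing $v$ and each $i$ (indices mod $n-1$), let $\gamma_i$ be the oriented $n$-cycle formed by inserting $v$ between $a_i$ and $a_{i+1}$, and set $T_i=\langle a_i v\rangle+\langle v a_{i+1}\rangle+\langle a_{i+1}a_i\rangle$. Directly in $H_1(K_n;{\mathbb Z})$ one checks $\gamma_i=\delta+T_i$ and, summing over $i$, $\sum_{i=1}^{n-1}T_i=-\delta$ (the $v$-incident edges cancel cyclically and the remaining summand contributes $-\delta$). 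Hence ${\rm lk}(f(e),f(\gamma_i))={\rm lk}(f(e),f(\delta))+{\rm lk}(f(e),f(T_i))$ with $\sum_i{\rm lk}(f(e),f(T_i))=-{\rm lk}(f(e),f(\delta))$, so squaring and summing over $i$ collapses the cross term to $-2\,{\rm lk}(f(e),f(\delta))^2$ and produces
\[
\sum_{i=1}^{n-1}{\rm lk}(f(e),f(\gamma_i))^2=(n-3)\,{\rm lk}(f(e),f(\delta))^2+\sum_{i=1}^{n-1}{\rm lk}(f(e),f(T_i))^2.
\]
The pairs $(\delta,i)$ biject with $\Gamma_n(K_n)$, and each edge of $K_n\setminus v\cong K_{n-1}$ lies in exactly $(n-3)!$ Hamiltonian cycles of $K_{n-1}$; summing first over $\delta$ and then over $v\in V(K_n)$, while noting that each $(n-1)$-cycle of $K_n$ misses exactly one vertex and each triangle contains exactly three, yields $nA_n=(n-3)\,A_{n-1}+3(n-3)!\,A_3$.

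Substituting $A_{n-1}=(n-3)!\,A_3$ from Step 1 into this identity gives $nA_n=n(n-3)!\,A_3$, hence $A_n=(n-3)!\,A_3=A_{n-1}$, completing the induction. The main technical obstacle I anticipate is the orientation-sensitive verification of the homology relation $\sum_i T_i=-\delta$ together with the two double-counts used to pass from the fixed-$v$ identity to the global one; both are elementary but require careful bookkeeping.
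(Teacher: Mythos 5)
Your proof is correct, but it follows a genuinely different route from the paper's in the inductive step. The paper runs an ordinary induction on $n$: it applies the $(n-1)$-case not only to the honest subgraphs $K_{n-1}^{(m)}$ but also to the subdivided subgraphs $F_{ij}^{(m)}$ (the edge $\overline{ij}$ replaced by the path $\overline{imj}$), which are only homeomorphic to $K_{n-1}$; summing the resulting identities over all pairs $i<j$ and then over $m$ yields $\sum_{\Gamma_{n}(K_n)}{\rm lk}^2=\sum_{\gamma\ni m}{\rm lk}^2+\sum_{\Gamma_{n-1}(K_{n-1}^{(m)})}{\rm lk}^2$ and hence $n\sum_{\Gamma_n}=n\sum_{\Gamma_{n-1}}$, with all homological input confined to the $K_4$ base case (Lemma \ref{circleK4lemma}). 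You instead use strong induction: your Step 1 averages the hypothesis over induced sub-$K_m$'s to get $A_{n-1}=(n-3)!A_3$, which is essentially Corollary \ref{circleKncor} one level down, and your Step 2 performs a fresh homological expansion at level $n$ (the vertex-insertion relations $\gamma_i=\delta+T_i$, $\sum_i T_i=-\delta$, which generalize the mechanism of the $K_4$ lemma), together with the enumerative fact that a fixed edge of $K_{n-1}$ lies on $(n-3)!$ Hamiltonian cycles; combining gives $nA_n=(n-3)A_{n-1}+3(n-3)!A_3$ and then $A_n=A_{n-1}$. What each buys: your argument avoids the subdivision trick entirely (no need to invoke the lemma for graphs merely homeomorphic to $K_{n-1}$, a point the paper leaves implicit), and it absorbs Corollary \ref{circleKncor} into the induction, so in the paper's structure the corollary would come for free; the price is strong induction, a new homology computation inside the inductive step, and one extra counting lemma, whereas the paper's step is purely combinatorial bookkeeping once the $K_4$ case is done. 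All of your individual claims check out: $\gamma_i=\delta+T_i$ and $\sum_i T_i=-\delta$ hold at the chain level, linearity of ${\rm lk}(f(e),f(\cdot))$ on $H_1(K_n;\mathbb{Z})$ follows since $f(K_n)$ lies in the complement of $f(e)$, the pairs $(\delta,i)$ do biject with $\Gamma_n(K_n)$, and the counts $n-m+1$, $(n-3)!$, the "misses exactly one vertex" and "three vertices per triangle" multiplicities are all correct.
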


\begin{proof}
Let us prove (\ref{cKnlk}) by induction on $n$. The case where $n=4$ is nothing other than Lemma \ref{circleK4lemma}. So we assume that $n>4$. Let $f(G)$ be a spatial graph of $G$. For a vertex $m$ of $K_{n}$ $(m=1,2,\ldots,n)$, we denote the subgraph obtained by removing $m$ and all edges incident to $m$ from $K_{n}$ by $K_{n-1}^{(m)}$. This subgraph is isomorphic to $K_{n-1}$, see the left figure in Fig. \ref{Knsubdivide}. Furthermore, for vertices $i,j$ and $m$ of $K_{n}$ such that $1\le i<j\le n$ and $i,j\neq m$, let $F_{ij}^{(m)}$ denote the subgraph obtained by removing the edge $\overline{ij}$ and $n-3$ edges $\overline{mk}$ $(1\le k\le n,\ k\neq i,j)$. This subgraph is not isomorphic to $K_{n-1}$ but is homeomorphic to it. Actually, $F_{ij}^{(m)}$ is obtained by replacing the edge $\overline{ij}$ in $K_{n-1}^{(m)}$ with the path $\overline{imj}$ of length $2$, see the right figure in Fig. \ref{Knsubdivide}. Then for the spatial subgraph $f(e\cup F_{ij}^{(m)})$, by the assumption we have 
\begin{eqnarray}\label{ind_hyp}
&&\sum_{\gamma\in\Gamma_{n}(F_{ij}^{(m)})}{\rm lk}(f(e),f(\gamma))^{2} 
+\sum_{\substack{\gamma\in\Gamma_{n-1}(K_{n-1}^{(m)}) \\ \overline{ij}\not\subset \gamma}}{\rm lk}(f(e),f(\gamma))^{2}\\
&=& \sum_{\substack{\gamma\in\Gamma_{n-1}(F_{ij}^{(m)}) \\ \overline{imj}\subset \gamma}} {\rm lk}(f(e),f(\gamma))^{2}
+\sum_{\substack{\gamma\in\Gamma_{n-2}(K_{n-1}^{(m)}) \\ \overline{ij}\not\subset \gamma}}{\rm lk}(f(e),f(\gamma))^{2}. \nonumber
\end{eqnarray}
Let us fix the vertex $m$ and add both sides of (\ref{ind_hyp}) for all $i,j$ such that $1\le i<j\le n$ and $i,j\neq m$. First, consider an $n$-cycle $\gamma$ of $K_{n}$ where the vertices $i$ and $j$ are adjacent to the vertex $m$ on $\gamma$. In this case, $\gamma$ is an $n$-cycle of $F_{ij}^{(m)}$ and we have 
\begin{eqnarray}\label{eq2l}
\sum_{\substack{1\le i<j\le n \\ i,j\neq m}}\bigg(\sum_{\gamma\in\Gamma_{n}(F_{ij}^{(m)})}{\rm lk}(f(e),f(\gamma))^{2}\bigg)
= \sum_{\gamma\in\Gamma_{n}(K_{n})}{\rm lk}(f(e),f(\gamma))^{2}. 
\end{eqnarray}
Next, consider an $(n-1)$-cycle $\gamma$ of $K_{n-1}^{(m)}$. Let $\overline{ij}$ be an edge of $K_{n-1}^{(m)}$ that is not contained in $\gamma$. Then there are $\binom{n - 1}{2} - (n-1) = (n^{2} -5n +4)/2$ ways to choose such $i$ and $j$, and thus we have 
\begin{eqnarray}\label{eq3l}
&& \sum_{\substack{1\le i<j\le n \\ i,j\neq m}}\bigg(\sum_{\substack{\gamma\in\Gamma_{n-1}(K_{n-1}^{(m)}) \\ \overline{ij}\not\subset \gamma}}{\rm lk}(f(e),f(\gamma))^{2}\bigg) \\ 
&=& \frac{n^{2} -5n +4}{2}\sum_{\gamma\in\Gamma_{n-1}(K_{n-1}^{(m)})}{\rm lk}(f(e),f(\gamma))^{2}. \nonumber
\end{eqnarray}
Next, consider an $(n-1)$-cycle $\gamma$ of $K_{n}$ containing the vertex $m$, where the vertices $i$ and $j$ are adjacent to $m$ on $\gamma$. Then $\gamma$ is an $(n-1)$-cycle of $F_{ij}^{(m)}$ containing the path $\overline{imj}$ and we have 
\begin{eqnarray}\label{eq4l}
\sum_{\substack{1\le i<j\le n \\ i,j\neq m}}\bigg(\sum_{\substack{\gamma\in\Gamma_{n-1}(F_{ij}^{(m)}) \\ \overline{imj}\subset \gamma}}\!\!\!\! {\rm lk}(f(e),f(\gamma))^{2}\bigg)
&=& \sum_{\substack{\gamma\in\Gamma_{n-1}(K_{n}) \\ m\subset \gamma}}\!\!\!\! {\rm lk}(f(e),f(\gamma))^{2}. 
\end{eqnarray}
Finally, consider an $(n-2)$-cycle $\gamma$ of $K_{n-1}^{(m)}$. Let $\overline{ij}$ be an edge of $K_{n-1}^{(m)}$ that is not contained in $\gamma$. Then there are $\binom{n - 1}{2} - (n-2) = (n^{2} - 5n +6)/2$ ways to choose such $i$ and $j$, and thus we have 
\begin{eqnarray}\label{eq5l}
&& \sum_{\substack{1\le i<j\le n \\ i,j\neq m}}\bigg(\sum_{\substack{\gamma\in\Gamma_{n-2}(K_{n-1}^{(m)}) \\ \overline{ij}\not\subset \gamma}}{\rm lk}(f(e),f(\gamma))^{2}\bigg)\\
&=& \frac{n^{2} - 5n +6}{2}\sum_{\gamma\in\Gamma_{n-2}(K_{n-1}^{(m)})}{\rm lk}(f(e),f(\gamma))^{2}. \nonumber
\end{eqnarray}
Furthermore, for each $f(K_{n-1}^{(m)})$, the following holds by the assumption:
\begin{eqnarray}\label{eq9l}
\sum_{\gamma\in \Gamma_{n-1}(K_{n-1}^{(m)})}{\rm lk}(f(e),f(\gamma))^{2}
= \sum_{\gamma\in \Gamma_{n-2}(K_{n-1}^{(m)})}{\rm lk}(f(e),f(\gamma))^{2}. 
\end{eqnarray}
Theorefore, from (\ref{ind_hyp}) and (\ref{eq2l}), (\ref{eq3l}), (\ref{eq4l}), (\ref{eq5l}), (\ref{eq9l}), we have 
\begin{eqnarray}\label{eq10l}
&& \sum_{\gamma\in \Gamma_{n}(K_{n})}{\rm lk}(f(e),f(\gamma))^{2}\\
&=& \sum_{\substack{\gamma\in\Gamma_{n-1}(K_{n}) \\ m\subset \gamma}}{\rm lk}(f(e),f(\gamma))^{2} + \sum_{\gamma\in \Gamma_{n-1}(K_{n-1}^{(m)})}{\rm lk}(f(e),f(\gamma))^{2}. \nonumber
\end{eqnarray}
Then, let us add both sides of (\ref{eq10l}) for all $m = 1,2,\ldots,n$. First, in the $(n-1)$-cycle $\gamma$ of $K_{n}$, there are $(n-1)$ ways to choose the vertex $m$ that is contained in $\gamma$. Thus we have 
\begin{eqnarray}\label{eq11l}
\sum_{m=1}^{n}\bigg(\sum_{\substack{\gamma\in\Gamma_{n-1}(K_{n}) \\ m\subset \gamma}}\!\!\!\! {\rm lk}(f(e),f(\gamma))^{2}\bigg) = (n-1)\sum_{\gamma\in \Gamma_{n-1}(K_{n})}\!\!\!\! {\rm lk}(f(e),f(\gamma))^{2}. 
\end{eqnarray}
Second, in the $(n-1)$-cycle $\gamma$ of $K_{n}$, for the vertex $m$ that is not contained in $\gamma$, the cycle $\gamma$ is an $(n-1)$-cycle of $K_{n-1}^{(m)}$. Since there is only one way to choose such an $m$, we have 
\begin{eqnarray}\label{eq12l}
\sum_{m=1}^{n}\bigg(\sum_{\gamma\in\Gamma_{n-1}(K_{n-1}^{(m)})}{\rm lk}(f(e),f(\gamma))^{2}\bigg)
= \sum_{\gamma\in \Gamma_{n-1}(K_{n})}{\rm lk}(f(e),f(\gamma))^{2}. 
\end{eqnarray}
Then by substituting (\ref{eq11l}) and (\ref{eq12l}) into (\ref{eq10l}), we have 
\begin{eqnarray*}
n\sum_{\gamma\in \Gamma_{n}(K_{n})}{\rm lk}(f(e),f(\gamma))^{2} 
= n \sum_{\gamma\in \Gamma_{n-1}(K_{n})}{\rm lk}(f(e),f(\gamma))^{2}. 
\end{eqnarray*}
This completes the proof. 
\end{proof}

\begin{figure}[htbp]
\begin{center}
\scalebox{0.475}{\includegraphics*{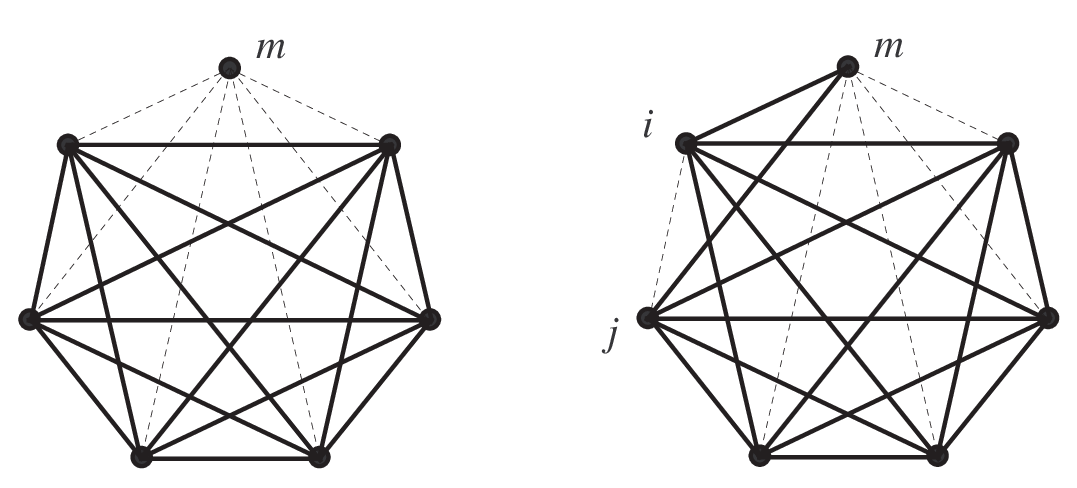}}
\caption{$K_{n-1}^{(m)}$ (left) and $F_{ij}^{(m)}$ (right) $(n=7)$}
\label{Knsubdivide}
\end{center}
\end{figure}

\begin{Corollary}\label{circleKncor} 
Let $n\ge 4$ be an integer and $G$ the graph which is a disjoint union of a loop $e$ and $K_{n}$. For every spatial graph $f(G)$ of $G$, the following holds: 
\begin{eqnarray}\label{cKnlk3} 
\sum_{\gamma\in\Gamma_{n}(K_{n})}{\rm lk}(f(e),f(\gamma))^2
=(n-3)!\sum_{\gamma\in\Gamma_{3}(K_{n})}{\rm lk}(f(e),f(\gamma))^2. 
\end{eqnarray}
\end{Corollary}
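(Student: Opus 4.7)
The plan is to iterate Lemma~\ref{circleKnlemma} by applying it \emph{simultaneously} to every $K_{n-r}$-subgraph of $K_n$, then summing and exploiting multiplicities. For $0 \le r \le n-4$ and each $r$-element vertex subset $M \subset \{1,\ldots,n\}$, let $K_{n-r}^{(M)}$ denote the induced subgraph of $K_n$ on the $n-r$ remaining vertices, so $K_{n-r}^{(M)} \cong K_{n-r}$. Applying Lemma~\ref{circleKnlemma} to $f(e \cup K_{n-r}^{(M)})$ immediately yields
\[
\sum_{\gamma \in \Gamma_{n-r}(K_{n-r}^{(M)})}\!\! {\rm lk}(f(e),f(\gamma))^2 = \sum_{\gamma \in \Gamma_{n-r-1}(K_{n-r}^{(M)})}\!\! {\rm lk}(f(e),f(\gamma))^2.
\]

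The key step is a short counting observation: a fixed $k$-cycle $\gamma$ of $K_n$ sits inside $K_{n-r}^{(M)}$ exactly when $M$ is disjoint from the $k$ vertices of $\gamma$, so there are precisely $\binom{n-k}{r}$ such subsets $M$. Summing the displayed equation over all $|M|=r$ and writing $L_k := \sum_{\gamma\in\Gamma_k(K_n)}{\rm lk}(f(e),f(\gamma))^2$, the coefficient on the left becomes $\binom{r}{r}=1$ and on the right $\binom{r+1}{r}=r+1$, yielding the recurrence
\[
L_{n-r} = (r+1)\, L_{n-r-1} \qquad (0 \le r \le n-4).
\]

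Finally, I will telescope this recurrence for $r=0,1,\ldots,n-4$, which produces the product $1 \cdot 2 \cdots (n-3) = (n-3)!$ and thereby the identity $L_n = (n-3)!\, L_3$ claimed in (\ref{cKnlk3}). The only delicate point to check is the boundary case $r=n-4$, where Lemma~\ref{circleKnlemma} is invoked on $K_4$ (its base case) and contributes the final link $L_4 = (n-3)L_3$ of the telescope; beyond this boundary bookkeeping, no genuine obstacle arises, since once Lemma~\ref{circleKnlemma} is available the Corollary reduces to a direct combinatorial calculation.
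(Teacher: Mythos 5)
Your proposal is correct and is essentially the paper's own argument: the paper likewise applies Lemma \ref{circleKnlemma} to every induced $K_{n-r}$ (written $K_{n-k}^{(m_1\cdots m_k)}$) and counts how many such subgraphs contain a fixed cycle, which is exactly your multiplicity computation $\binom{r}{r}=1$ versus $\binom{r+1}{r}=r+1$. The only difference is presentational—you state it as the recurrence $L_{n-r}=(r+1)L_{n-r-1}$ and telescope, while the paper writes the same steps as one chain of equalities accumulating the factor $k!$.
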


\begin{proof}
We denote the subgraph of $K_{n}$ obtained by removing exactly $k$ vertices $m_{1},m_{2},\ldots,m_{k}\ (1\le k\le n-1)$ and all edges incident to them from $K_{n}$ by $K_{n-k}^{(m_{1}m_{2}\cdots m_{k})}$. Note that this subgraph is isomorphic to $K_{n-k}$. Then by repeatedly using Lemma \ref{circleKnlemma}, we have 
\begin{eqnarray*}
&& \sum_{\gamma\in \Gamma_{n}(K_{n})}{\rm lk}(f(e),f(\gamma))^{2} 
= \sum_{\gamma\in \Gamma_{n-1}(K_{n})}{\rm lk}(f(e),f(\gamma))^{2}\\ 
&=& \sum_{m_{1}=1}^{n}\bigg(\sum_{\gamma\in \Gamma_{n-1}(K_{n-1}^{(m_{1})})}{\rm lk}(f(e),f(\gamma))^{2}\bigg)\\
&=& \sum_{m_{1}=1}^{n}\bigg(\sum_{\gamma\in \Gamma_{n-2}(K_{n-1}^{(m_{1})})}{\rm lk}(f(e),f(\gamma))^{2}\bigg)
= 
2\sum_{\gamma\in \Gamma_{n-2}(K_{n})}{\rm lk}(f(e),f(\gamma))^{2}\\
&=& 2\sum_{1\le m_{1}<m_{2}\le n}\bigg(\sum_{\gamma\in \Gamma_{n-2}(K_{n-2}^{(m_{1}m_{2})})}{\rm lk}(f(e),f(\gamma))^{2}\bigg)\\
&=& 2\sum_{1\le m_{1}<m_{2}\le n}\bigg(\sum_{\gamma\in \Gamma_{n-3}(K_{n-2}^{(m_{1}m_{2})})}{\rm lk}(f(e),f(\gamma))^{2}\bigg)\\
&=& 2\cdot 3\sum_{\gamma\in \Gamma_{n-3}(K_{n})}{\rm lk}(f(e),f(\gamma))^{2}
= \cdots = k! \sum_{\gamma\in \Gamma_{n-k}(K_{n})}{\rm lk}(f(e),f(\gamma))^{2}.
\end{eqnarray*} 
So if we set $k=n-3$, we have the desired conclusion. 
\end{proof} 

\begin{proof}[Proof of Theorem \ref{lkrefine}]
For a $k$-cycle $\gamma$ of $K_n$, let $G_{\gamma}$ denote the subgraph of $K_n$ obtained by removing the $k$ vertices of $\gamma$ and their edges. Note that this subgraph is isomorphic to $K_{n-k}$. Then we have 
\begin{eqnarray}\label{lkpqeq}
&&(1+ \delta_{pq})\sum_{\lambda\in \Gamma_{p,q}(K_{n})}{\rm lk}(f(\lambda))^{2}\\
&=& \sum_{\gamma\in \Gamma_{p}(K_{n})}
\bigg(
\sum_{\gamma'\in \Gamma_{q}(G_{\gamma})}{\rm lk}(f(\gamma),f(\gamma'))^{2}
\bigg). \nonumber
\end{eqnarray}
Then by Corollary \ref{circleKncor}, the right side of (\ref{lkpqeq}) can be expressed as follows: 
\begin{eqnarray}\label{lkpqeq2}
&& \sum_{\gamma\in \Gamma_{p}(K_{n})}
\bigg(
\sum_{\gamma'\in \Gamma_{q}(G_{\gamma})}{\rm lk}(f(\gamma),f(\gamma'))^{2}
\bigg) \\
&=& \sum_{\gamma\in \Gamma_{p}(K_{n})}
\bigg(
(q-3)!\sum_{\gamma'\in \Gamma_{3}(G_{\gamma})}{\rm lk}(f(\gamma),f(\gamma'))^{2}
\bigg)\nonumber \\
&=& (q-3)!\sum_{\gamma'\in \Gamma_{3}(K_{n})}
\bigg(
\sum_{\gamma\in \Gamma_{p}(G_{\gamma'})}{\rm lk}(f(\gamma),f(\gamma'))^{2}
\bigg).\nonumber
\end{eqnarray}
Let $H_{\gamma'}^{i}\ (i=1,2,\ldots,l,\ l=\binom{n-3}{p})$ be the collection of all subgraphs of $G_{\gamma'}$ that are isomorphic to $K_{p}$. Then the sets of $p$-cycles $\Gamma_{p}(H_{\gamma'}^{i})$ are mutually disjoint, and their union is $\Gamma_{p}(G_{\gamma'})$. Then by Corollary \ref{circleKncor}, we have 
\begin{eqnarray}\label{lkpqeq3}
&& \sum_{\gamma'\in \Gamma_{3}(K_{n})}
\bigg(
\sum_{\gamma\in \Gamma_{p}(G_{\gamma'})}{\rm lk}(f(\gamma),f(\gamma'))^{2}
\bigg)\\
&=& 
\sum_{\gamma'\in \Gamma_{3}(K_{n})}
\bigg(
\sum_{i=1}^{l}
\bigg(
\sum_{\gamma\in \Gamma_{p}(H_{\gamma'}^{i})}{\rm lk}(f(\gamma),f(\gamma'))^{2}
\bigg)
\bigg)\nonumber\\
&=& 
\sum_{\gamma'\in \Gamma_{3}(K_{n})}
\bigg(
\sum_{i=1}^{l}
\bigg(
(p-3)!\sum_{\gamma\in \Gamma_{3}(H_{\gamma'}^{i})}{\rm lk}(f(\gamma),f(\gamma'))^{2}
\bigg)
\bigg)\nonumber\\
&=& 
(p-3)!\sum_{\gamma'\in \Gamma_{3}(K_{n})}
\bigg(
\sum_{i=1}^{l}
\bigg(
\sum_{\gamma\in \Gamma_{3}(H_{\gamma'}^{i})}{\rm lk}(f(\gamma),f(\gamma'))^{2}
\bigg)
\bigg).\nonumber
\end{eqnarray}
Note that each $3$-cycle $\gamma$ of $G_{\gamma'}$ is shared by exactly $\binom{n-6}{p-3}$ of the $H_{\gamma'}^{i}$ subgraphs. This implies that 
\begin{eqnarray}\label{lkpqeq4}
\sum_{i=1}^{l}
\bigg(
\sum_{\gamma\in \Gamma_{3}(H_{\gamma'}^{i})}\!\!\!\! {\rm lk}(f(\gamma),f(\gamma'))^{2}
\bigg)
= \binom{n-6}{p-3}
\sum_{\gamma\in \Gamma_{3}(G_{\gamma'})}\!\!\!\! {\rm lk}(f(\gamma),f(\gamma'))^{2}. 
\end{eqnarray}
Then it holds from (\ref{lkpqeq2}), (\ref{lkpqeq3}) and (\ref{lkpqeq4}) that 
\begin{eqnarray}\label{lkpqeq5}
&& \sum_{\gamma\in \Gamma_{p}(K_{n})}
\bigg(
\sum_{\gamma'\in \Gamma_{q}(G_{\gamma})}{\rm lk}(f(\gamma),f(\gamma'))^{2}
\bigg)\\
&=&
(p-3)!(q-3)!\binom{n-6}{p-3}\sum_{\gamma'\in \Gamma_{3}(K_{n})}
\bigg(
\sum_{\gamma\in \Gamma_{3}(G_{\gamma'})}{\rm lk}(f(\gamma),f(\gamma'))^{2}
\bigg)\nonumber\\
&=& 2(n-6)! \sum_{\lambda\in \Gamma_{3,3}(K_{n})}{\rm lk}(f(\lambda))^{2}.\nonumber
\end{eqnarray}
By (\ref{lkpqeq}), (\ref{lkpqeq5}) and $2/(1 + \delta_{pq}) = 2-\delta_{pq}$, we have the result. 
\end{proof}


%
{\normalsize
}

\end{document}